\tikzstyle{startstop} = [rectangle, rounded corners, minimum width=2.5cm, minimum height=0.5cm,text centered, text width=2cm, draw=black, fill=white!30]
\tikzstyle{startstop2} = [rectangle, rounded corners, minimum width=2.5cm, minimum height=0.5cm,text centered, text width=2cm, draw=black, fill=white!30]
\tikzstyle{startstop3} = [rectangle, rounded corners, minimum width=2.5cm, minimum height=0.5cm,text centered, text width=2.5cm, draw=black, fill=white!30]
\tikzstyle{arrow} = [thick,->,>=stealth]
\newtheorem{theorem}{Theorem}[section]
\newtheorem{lemma}[theorem]{Lemma}
\newtheorem{question}[theorem]{Question}
\newtheorem{corollary}[theorem]{Corollary}
\newtheorem{proposition}[theorem]{Proposition}
\newtheorem{remark}[theorem]{Remark}
\numberwithin{equation}{section}
\numberwithin{figure}{section}
\newcommand{\CC}{\mathbb{C}}
\newcommand{\CP}{\mathbb{CP}}
\newcommand{\cS}{\mathcal{S}}
\newcommand{\ev}{\mathrm{ev}}
\newcommand{\Symp}{\mathrm{Symp}}
\newcommand{\Diff}{\mathrm{Diff}}
\def\eM{\EuScript{M}}
\newcommand{\PSL}{\mbox{PSL}}
\begin{document}

\title
{Infinite connected components of the space of symplectic forms on ruled surfaces}

\author{Jianfeng Lin and Weiwei Wu}

\maketitle

\begin{abstract}
    We provide an infinite family of diffeomorphic symplectic forms on ruled surfaces, which are pairwise non-isotopic. This answers a uniqueness question regarding symplectic structures up to isotopy on closed symplectic four-manifolds.
\end{abstract}

\section{Introduction}
A fundamental question in symplectic topology is to study the existence and uniqness of symplectic structures on a given smooth manifold.  There is a zoo of uniqueness problems of symplectic structures with many different flavors.  We recommend Salamon's beautiful survey \cite{SalamonSurvey} to interested readers, as well as related sections in \cite{McDuffSalamon}. There are many symplectic manifolds, mostly of dimension 4, which are known to have a unique symplectic structure up to diffeomorphisms, e.g. $\CP^2$ and symplectic ruled surfaces.

In this note, we also focus on dimension $4$.  Let $X$ be a closed 4-manifold and let $a\in H^{2}(X;\mathbb{R})$ be a class. Consider the space of symplectic forms
\[
\mathcal{S}_{a}:=\{\text{symplectic form $\omega$ on $X$ with $[\omega]=a$}\}
\]
Any path between two forms in $\mathcal{S}_a$ is called an \textit{isotopy} of symplectic forms.  We will consider the following well-known open question (see \cite{SalamonSurvey}, \cite[13.1]{MSIntro}).
\begin{question}\label{question isotopy} Does there exist a closed 4-manifold $X$ such that $\mathcal{S}_{a}$ is disconnected? 
\end{question}

 Question \ref{question isotopy} is equivalent to asking whether cohomologous symplectic forms are unique up to isotopy.  It is known to McDuff \cite{McEx} that $\cS_a$ can be disconnected for some six-dimensional manifolds.  

For the simplest cases of symplectic 4-manifolds such as complex projective planes and quadric surfaces, one may boil down the question to asking for the connectedness of diffeomorphism groups of underlying smooth manifolds \cite{SalamonSurvey}, thanks to the series of Gromov \cite{Gromov85}, Taubes \cite{TaubesGr,TaubesGrSW,TaubesSW}, McDuff \cite{McDuffRuled}, Abreu-McDuff \cite{Abreu}.  
For non-compact manifolds, Wang \cite{wang2024note} showed that there are infinitely pairwise distinct non-standard symplectic forms that are standard at infinity on $S^1\times D^3$.

The main result of this paper gives an affirmative answer to Question \ref{question isotopy} by showing the space of cohomologous symplectic forms on an irrational ruled surfaces has infinitely many components.

\begin{theorem}\label{thm: main} Let $S^2\to X\xrightarrow{\pi}\Sigma$ be a ruled surface with $g(\Sigma)>0$. Then for any $a$ with $a^2>0$, the space $\mathcal{S}_a$ has infinitely many components. 
\end{theorem}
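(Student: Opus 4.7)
The plan is to produce infinitely many diffeomorphisms $\phi_n : X \to X$ preserving the cohomology class $a$, show that the pulled-back symplectic forms $\phi_n^*\omega$ are pairwise non-isotopic for a fixed reference $\omega \in \mathcal{S}_a$, and detect this separation by a family gauge-theoretic invariant. The first step is a Moser-type reduction: McDuff's uniqueness theorem gives that every form in $\mathcal{S}_a$ equals $\phi^*\omega$ for some $\phi$ in the subgroup $\Diff^a(X) \subseteq \Diff(X)$ preserving $a$, while Moser stability shows $\phi^*\omega$ is isotopic to $\omega$ iff $\phi \in \Diff_0(X) \cdot \Symp(X,\omega)$. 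The fibration $\Symp(X,\omega) \hookrightarrow \Diff^a(X) \to \mathcal{S}_a$ gives the exact sequence
\[
\pi_0(\Symp(X,\omega)) \longrightarrow \pi_0(\Diff^a(X)) \longrightarrow \pi_0(\mathcal{S}_a) \longrightarrow 0,
\]
reducing the task to finding infinitely many distinct cosets of the image of $\pi_0(\Symp(X,\omega))$.

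Next, I would construct the candidate family. The hypothesis $g(\Sigma) > 0$ provides incompressible submanifolds of $X$ coming from essential simple closed curves on $\Sigma$: the preimage of such a curve under $\pi$ is an $S^2$-bundle over $S^1$, which together with a section of $\pi$ supports generalized fibered Dehn twists. Choose $\phi$ to be such a twist and set $\phi_n := \phi^n$. By construction $\phi$ acts trivially on $H^*(X)$, so every $\phi_n$ lies in $\Diff^a(X)$, and the remaining task is to show that $[\phi_n] \in \pi_0(\mathcal{S}_a)$ are pairwise distinct.

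Separating the classes requires a family gauge-theoretic invariant. The key point is that $b^+(X) = 1$, so the Seiberg-Witten invariants have a nontrivial chamber structure, and family Seiberg-Witten or family Bauer-Furuta invariants applied to the mapping torus $M_\phi = X \times_\phi S^1$ detect wall-crossings along 1-parameter families of metrics or symplectic forms. The goal is to produce an invariant $I : \pi_0(\Diff^a(X)) \to G$ (for some infinite abelian group $G$) that vanishes on the image of $\pi_0(\Symp(X,\omega))$, hence descends to $\pi_0(\mathcal{S}_a)$, and to verify by an explicit computation that $I(\phi^n)$ takes infinitely many values in $n$ (ideally $I(\phi^n) = n \cdot c$ for some nonzero $c$).

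The main obstacle is the compatibility between the topological construction of $\phi$ and the invariant computation. One needs both that (i) the family invariant is genuinely well-defined as a function on $\pi_0(\mathcal{S}_a)$, not merely on $\pi_0(\Diff^a(X))$, and (ii) $I(\phi^n)$ can be computed explicitly and shown to be unbounded. Matching these two requirements --- likely via a wall-crossing formula or an equivariant localization reducing the family moduli to a finite-dimensional count tied to the geometry of the twist locus chosen in the previous step --- is the crux of the argument and where the bulk of the technical work must reside.
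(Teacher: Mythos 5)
There is a genuine gap, and it occurs already at the construction step. Your candidate diffeomorphisms --- twists supported on the preimage $\pi^{-1}(c)\cong S^2$-bundle over $S^1$ of an essential curve $c\subset\Sigma$ --- are \emph{fiber-preserving}: they cover the identity (or an isotopically trivial map) on $\Sigma$ and act by loops of rotations on the $S^2$ fibers. Two things go wrong. First, such twists are classified up to isotopy by $H^1(\Sigma;\mathbb{Z}/2)$ (since $SO(3)\hookrightarrow \Diff^+(S^2)$ is a homotopy equivalence and $\pi_1(SO(3))=\mathbb{Z}/2$), so $\phi^2$ is already smoothly isotopic to the identity through fiber-preserving diffeomorphisms; the powers $\phi^n$ represent at most two mapping classes, not infinitely many. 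Second, and decisively, every fiber-preserving diffeomorphism of $X$ is smoothly isotopic to a symplectomorphism of $(X,\omega_\mu)$: the pullback $\phi^*\omega_\mu$ is still compatible with the ruling, Lalonde--McDuff connect it to $\omega_\mu$ by a path of cohomologous forms, and Moser converts this into a smooth isotopy from $\phi$ to a symplectomorphism. Hence every $\phi_n^*\omega$ lies in the \emph{same} component of $\mathcal{S}_a$ as $\omega$, and your family collapses to a single point of $\pi_0(\mathcal{S}_a)$. No choice of invariant $I$ can rescue this; the classes $[\phi^n]$ all lie in the image of $\pi_0(\Symp(X,\omega))$ on which $I$ is required to vanish. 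The diffeomorphisms that actually work must destroy the ruling in an essential way: the paper uses self-referential barbell diffeomorphisms (Budney--Gabai), built from a fiber sphere and a nullhomotopic sphere joined by an arc passing through the ball the latter bounds, and detects them by showing that the image $f_\gamma^m(\Sigma_0)$ of a section has nonzero relative Dax invariant, hence is not smoothly isotopic to $\Sigma_0$, while any symplectic surface in that homology class \emph{is} smoothly isotopic to $\Sigma_0$.

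Your Moser-type reduction in the first paragraph is sound and is essentially how the paper passes from non-isotopy of forms to statements about diffeomorphisms. But the detection step is also only a program: you would need a family Seiberg--Witten or Bauer--Furuta invariant that is (i) well defined on $\pi_0(\mathcal{S}_a)$ and (ii) computed to be unbounded on $\{\phi^n\}$, and neither is established or known for irrational ruled surfaces (where ordinary SW theory is constrained by $b^+=1$ and, per Taubes, Gromov--Witten invariants see only the smooth structure). The paper's replacement for this step is entirely different and non-gauge-theoretic: a uniqueness-of-symplectic-sections result (via pseudoholomorphic curves) plus the Dax invariant from geometric topology.
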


\begin{remark} 
Symplectic forms on a ruled surface $X$ have been completely classified up to diffeomorphisms by Lalonde-McDuff \cite{LalondeMcDuff}. In particular, $\mathcal{S}_{a}$ is nonempty if and only if $a^2> 0$. And they further proved that any two cohomologous symplectic forms on $X$ are diffeomorphic. 
Consider the space $\mathcal{S}=\cup_{a}\mathcal{S}_{a}$ of \emph{all} symplectic forms on $X$. We say two symplectic forms are homotopic if they are in the same path component of $\mathcal{S}$. \footnote{Many recent literatures refer to such forms as ``deformation equivalent" \cite{McD96}, but the same terminology in \cite{SalamonSurvey} allows a possible composition by diffeomorphisms.  Our current convention tries to avoid possible confusion.  }
By \cite{LalondeMcDuff,LalondeMcDuffJ}, two cohomologous symplectic forms on ruled surfaces are isotopic if and only if they are homotopic. Therefore, Theorem \ref{thm: main} implies that $\mathcal{S}$ also has infinitely many components. This gives the first example of non-homotopic symplectic forms on closed 4-manifolds with identical Chern classes. It also gives the first example of symplectic forms closed 4-manifolds that are formally homotopic (i.e., connected by a path of nondegenerate forms) but not homotopic. See \cite{CieliebakHprinciple}. In higher dimensions, such examples were established by Ruan \cite{Ruan94} using the Gromov-Witten invariants. On the other hand, by the work of Taubes, the Gromov-Witten invariants of symplectic 4-manifolds only depend on the smooth structure \cite{TaubesGr,TaubesGrSW,TaubesSW}.
\end{remark}

To prove Theorem \ref{thm: main}, we take the product symplectic form on a ruled surface and pull it back by the barbell diffeomorphisms (introduced by Budney-Gabai \cite{budney2019knotted}). This construction is similar to McDuff's example of non-isotopic symplectic forms on 6-manifolds \cite{McEx}, obtained by pulling back the product symplectic form on $T^2\times S^2\times S^2$ by a diffeomorphism. A key difference is that McDuff used a diffeomorphism that respects the ruling, while our diffeomorphism changes the ruling in an essential way. Indeed, we use the Dax invariant to prove that the pulled-back ruling is not smoothly isotopic to the original one. On the symplectic side, we prove that two rulings must be smoothly isotopic if they are compatible with two isotopic symplectic structures. Combining these results, we show that the pulled-back symplectic forms are not isotopic to each other.



\subsection*{Acknowledgement} J. Lin is supported by NSFC Grant 12271281, W. Wu is supported by NSFC 12471063.  The authors are grateful to R. Hind and J. Li for discussions on the isotopy uniqueness problem of symplectic surfaces.

\section{Some isotopy results in symplectic geometry}

 Let $\Sigma$ be a closed, oriented surface with $g(\Sigma)>0$. Let $p: L\to \Sigma$ be a complex line bundle. We assume $L$ is either trivial or $\langle c_{1}(L),[\Sigma]\rangle=1$. We use $X$ to denote the fiberwise one-point compactification of $L$. Then $X$ is the product manifold $S^2\times \Sigma$ if $L$ is trivial; when $\langle c_{1}(L),[\Sigma]\rangle=1$, $X$ is diffeomorphic to a twisted $S^2$-bundle over $\Sigma$, denoted by $S^2\widetilde{\times}\Sigma$. 
 We use $\Sigma_0\hookrightarrow X$ and $\Sigma_\infty\to X$ to denote the section in $0$ and $\infty$. For any $b\in \Sigma$, we use $F_{b}$ to denote the fiber over $b$. We fix a base point $b_0\in \Sigma$ and use $F$ to denote $F_{b_0}$. The space $X$ has a complex structure $J_0$ such that $\Sigma_0$ and $\{F_{b}\}_{b\in \Sigma}$ are all $J_0$-holomorphic. For any $\mu>0$, there exists a symplectic form $\omega_{\mu}$ on $X$ such that $J_0$ is $\omega_\mu$-compatible and that 
 \[
 \operatorname{PD}[\omega_{\mu}]=[\Sigma_0]+\mu [F].
 \]
 Moreover, such symplectic forms are unique up to isotopy. (See \cite{McDuffRuled}.) And any symplectic form on $X$ is diffeomorphic to $\omega_{\mu}$ to rescaling by a constant. \cite{LalondeMcDuff,LalondeMcDuffJ}.


We consider the symplectomorphism group $\Symp(X,\omega_{\mu})$ and the orientation-preserving diffeomorphism group $\Diff^{+}(X)$. We also consider the subgroup $\Diff^{+}_{F}(X)\subset \Diff^{+}(X)$ of fiber-preserving diffeomorphisms. Namely, $\operatorname{Diff}^{+}_{F}(X)$ consists of diffeomorphisms that cover orientation-preserving diffeomorphisms on $\Sigma$. We consider the maps 
\[
i_{1}: \pi_{0}(\Symp(X,\omega_{\mu}))\to \pi_0(\Diff(X))\]
and
\[ i_{2}: \pi_{0}(\Diff_{F}(X))\to \pi_0(\Diff(X))
\]
induced by natural inclusions. 

We start by investigating the group $\operatorname{Diff}^{+}_{F}(X)$. By definition, we have a group homomorphism 
\begin{equation}\label{eq: rho}
\rho: \Diff^{+}_{F}(X)\to \Diff^{+}(\Sigma)    
\end{equation}
such that $f(F_{b})\subset F_{\rho(f)(b)}$ for any $f\in \Diff^{+}(X)$ and any $b\in \Sigma$.

\begin{lemma}\label{lem: rho serre fibration} $\rho$ is a surjective Serre fibration.
\end{lemma}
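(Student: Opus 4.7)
The plan is to establish the two assertions separately. Surjectivity is a statement about lifting a diffeomorphism through an $S^{2}$-bundle and reduces to the classification of complex line bundles over $\Sigma$. The Serre fibration property will be proved by the standard device of horizontal lifting with respect to a fixed smooth connection on $\pi:X\to\Sigma$.

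\emph{Surjectivity.} Given $\varphi\in\Diff^{+}(\Sigma)$, orientation preservation implies $\varphi_{*}[\Sigma]=[\Sigma]$, so $\langle \varphi^{*}c_{1}(L),[\Sigma]\rangle=\langle c_{1}(L),[\Sigma]\rangle$, and since $H^{2}(\Sigma;\ZZ)\cong\ZZ$ this forces $\varphi^{*}L\cong L$ as complex line bundles. Since fiberwise one-point compactification commutes with pullback, we obtain a smooth bundle isomorphism $\alpha: X\to\varphi^{*}X$ over $\Sigma$. Composing with the canonical projection $\varphi^{*}X\to X$, which covers $\varphi$, produces a fiber-preserving, orientation-preserving lift $\tilde{\varphi}$ of $\varphi$. (Equivalently, in the twisted case one may present $X$ as $\proj(L\oplus\underline{\CC})$ and apply the same argument to the projectivized bundle.)

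\emph{Serre fibration.} Suppose we are given a relative lifting problem: a map $H:D^{n}\times[0,1]\to\Diff^{+}(\Sigma)$ and an initial lift $\tilde{H}_{0}:D^{n}\to\Diff^{+}_{F}(X)$ with $\rho\circ\tilde{H}_{0}=H(\cdot,0)$. Set $K(y,t):=H(y,t)\circ H(y,0)^{-1}$, a family of base isotopies satisfying $K(y,0)=\id$ and depending smoothly on $y$. Its time derivative is a smooth family $V_{y,t}$ of time-dependent vector fields on $\Sigma$. Fix once and for all a smooth horizontal distribution on $X\to\Sigma$, for instance the one induced by a Hermitian metric on $L$, and let $\tilde{V}_{y,t}$ denote the horizontal lift of $V_{y,t}$. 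Because $X$ is closed, the time-dependent flow of $\tilde{V}_{y,t}$ exists for all $t\in[0,1]$ and produces a smooth family $\Phi_{y,t}\in\Diff^{+}_{F}(X)$ with $\Phi_{y,0}=\id$ and $\rho(\Phi_{y,t})=K(y,t)$. Setting $\tilde{H}(y,t):=\Phi_{y,t}\circ\tilde{H}_{0}(y)$ yields the desired extension.

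\emph{Main obstacle.} Conceptually there is none; both pieces are essentially bookkeeping. The one point to verify carefully is continuity and smoothness of the horizontal flow in the parameter $y\in D^{n}$, which is immediate from smooth dependence of ODE solutions on parameters together with compactness of $X$. Note that the argument is insensitive to whether $\Diff^{+}(\Sigma)$ is path-connected (it is not, since $g(\Sigma)>0$): the different mapping-class-group components of the base are handled by the surjectivity step, and the path-lifting argument then runs within each component.
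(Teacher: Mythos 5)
Your proposal is correct and follows essentially the same route as the paper: surjectivity via the classification of line bundles over $\Sigma$ by their Chern class (with the orientation-preservation point made explicit), and the lifting property via horizontal lifts of time-dependent vector fields coming from a unitary connection on $L$, extended over the fiberwise compactification. The only difference is cosmetic: you carry out the parametrized lifting problem over $D^{n}\times[0,1]$ in full, whereas the paper writes out only the path-lifting case and notes that the general case is similar.
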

\begin{proof} 
We first prove that $\rho$ is surjective. For $g\in \Diff^{+}(\Sigma)$, the line bundle $g\circ p: L\to \Sigma$ is isomorphic to the line bundle $p:L\to \Sigma$ because they have identical Chern classes. Therefore, there exists a
diffeomorphism $f':L\to L$ that covers $g$.  Furthermore, by choosing a Hermitian metric on $L$, we may assume $f'$ to be a unitary bundle isomorphism. After fiberwise compactification, we obtain $f\in\Diff_{F}(X)$ with $\rho(f)=g$. 

Next, we prove that $\rho$ satisfies the path lifting property.
Given $f\in \Diff_{F}(X)$ and a path $\gamma:I\to \Diff^{+}(\Sigma)$ with $\gamma(0)=\rho(f)$. We can find a time-dependent vector field $\mathcal{V}$ such that 
\[
\gamma(t)=\varphi_{t}\circ \gamma(0)\in \Diff^{+}(\Sigma),
\]
where $\varphi_{t}:\Sigma\to \Sigma$ is the time-$t$ flow generated by $\mathcal{V}$. We can pick a unitary connection $A$ on $L\to \Sigma$ and use it to lift $\mathcal{V}$ to a vector field $\widehat{\mathcal{V}}$ on $L$. Note that $X=L\cup \Sigma_{\infty}$.
We define a vector field $\widetilde{\mathcal{V}}$ by gluing together $\widehat{\mathcal{V}}$ on $L$ and $\mathcal{V}$ on $\Sigma_{\infty}$. Let $\widetilde{\varphi}_{t}: X\to X$ be the flow generated by $\widetilde{\mathcal{V}}$. Then the path $\widetilde{\gamma}:I\to \Diff^{+}(X)$ defined by $\widetilde{\gamma}(t)=\widetilde{\varphi}_{t}\circ f$ is a lift of $\gamma$. This verifies the path lifting property. 

A general lifting property can be proved similarly.
\end{proof}
By Lemma \ref{lem: rho serre fibration}, we have a long exact sequence of groups
\begin{equation}\label{eq: exact sequence for rho}
\begin{split}
\cdots \to \pi_{1}(\Diff^{+}_{F}(X),\operatorname{id})\rightarrow &\pi_{1}(\Diff^{+}(\Sigma),\operatorname{id})\xrightarrow{\partial}\\ &\pi_{0}(\ker \rho)\xrightarrow{i_*}  \pi_{0}(\Diff^{+}_{F}(X))\xrightarrow{\rho_*} \pi_{0}(\Diff^{+}(\Sigma))\to 0.     
\end{split}
\end{equation}
\begin{lemma}\label{lem: ker rho} There is a canonical isomorphism $\pi_{0}(\ker \rho)\cong H^{1}(\Sigma;\mathbb{Z}/2)$. 
\end{lemma}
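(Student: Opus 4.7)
The plan is to realize $\ker\rho$ as the space of smooth sections of a fiber bundle over $\Sigma$, apply Smale's theorem to reduce the fiber to $SO(3)$, and then compute $\pi_0$ by obstruction theory on the $2$-dimensional CW complex $\Sigma$.

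An element $f\in\ker\rho$ is a self-diffeomorphism of $X$ that preserves each fiber $F_b$ and therefore restricts to $f_b\in\Diff^+(F_b)\cong\Diff^+(S^2)$; conversely, a smoothly varying family of fiberwise diffeomorphisms assembles to such an $f$. This gives a natural identification
\[
\ker\rho \;\cong\; \Gamma(\Sigma,\, E),
\]
where $E\to\Sigma$ is the fiber bundle associated to the $SO(3)$-bundle $X\to\Sigma$ via the conjugation action of $SO(3)$ on $\Diff^+(S^2)$, and $\Gamma$ denotes smooth sections. By Smale's theorem the inclusion $SO(3)\hookrightarrow\Diff^+(S^2)$ is an $SO(3)$-equivariant homotopy equivalence, so taking associated bundles produces a sub-bundle $E_0\subset E$ with fibers $SO(3)$ and a canonical identity section $s_{\id}$, such that $E_0\hookrightarrow E$ is a fiberwise homotopy equivalence. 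Hence $\pi_0(\ker\rho)\cong\pi_0(\Gamma(\Sigma,E_0))$.

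Since $\Sigma$ is a $2$-dimensional CW complex and $E_0$ has the section $s_{\id}$, standard obstruction theory for sections relative to $s_{\id}$ yields successive obstructions: the primary obstruction lies in $H^1(\Sigma;\pi_1(SO(3)))=H^1(\Sigma;\mathbb{Z}/2)$, while the only higher obstruction lies in $H^2(\Sigma;\pi_2(SO(3)))=0$, since $\pi_2$ of any Lie group vanishes. The local coefficient system on $\pi_1(SO(3))$ is automatically constant, since the conjugation action of a connected topological group on its own homotopy groups is trivial (and, alternatively, $\Aut(\mathbb{Z}/2)$ is trivial). The resulting obstruction map
\[
\pi_0(\Gamma(\Sigma,E_0))\;\xrightarrow{\;\cong\;}\; H^1(\Sigma;\mathbb{Z}/2)
\]
is a bijection: surjectivity is shown by explicitly constructing sections over the $1$-skeleton whose pairing with a chosen basis of $H_1(\Sigma;\mathbb{Z})$ realizes any prescribed homomorphism to $\mathbb{Z}/2$, and injectivity follows from the vanishing of the secondary obstruction.

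The main step requiring care is the obstruction analysis in the twisted case $X=S^2\widetilde{\times}\Sigma$, where $E_0$ is a genuinely nontrivial $SO(3)$-bundle over $\Sigma$; one must verify that surjectivity and injectivity of the obstruction map still go through despite the twisting, and that the resulting bijection is canonical (independent of the auxiliary CW structure and trivializations). This canonicity should ultimately come from identifying the primary obstruction with a universally defined characteristic class associated to a loop of fiberwise diffeomorphisms.
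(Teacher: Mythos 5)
Your proposal is correct and is essentially the paper's argument: both identify $\ker\rho$ with the section space of a bundle with fiber $\Diff^{+}(S^2)$, invoke Smale's theorem to replace the fiber by $SO(3)$, and then use $\pi_0(SO(3))=\pi_2(SO(3))=0$ together with $\pi_1(SO(3))=\mathbb{Z}/2$ over the $2$-complex $\Sigma$; the paper just phrases the obstruction-theoretic step concretely via a trivialization of the bundle over the $1$-skeleton (which also disposes of your worry about the twisted case), and realizes surjectivity by the explicit fiberwise-rotation diffeomorphisms $f_{\beta}$ rather than by abstract realization of difference cocycles. The $f_{\beta}$ also make the isomorphism visibly canonical, as the mod $2$ reduction of $\beta\mapsto[f_{\beta}]$.
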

\begin{proof}
Given any $\beta\in H^{1}(\Sigma;\mathbb{Z})$, there exists a map $g_{\beta}:\Sigma\to S^1$ such that \[g^*_{\beta}(1)=\beta\in H^{1}(X;\mathbb{Z}).\]
Such $g_{\beta}$ is unique up to homotopy. Consider the map $h_{\beta}: L\to L$ defined by \[h_{\beta}(z)=g_{\beta(b)}\cdot z,\quad  \forall z\in F_{b}.\]
Then we can define the diffeomorphism 
\begin{equation}\label{eq: f-beta}
    f_{\beta}=h_{\beta}\cup \operatorname{id}_{\Sigma_{\infty}}: X\to X. 
\end{equation}

This gives a group homomorphism
\[q_1: H^{1}(\Sigma;\mathbb{Z})\to \pi_0(\ker \rho),\quad q_1(\beta)=[f_{\beta}].\] 

Let $P\to \Sigma$ be the fiber bundle whose fiber over $b\in \Sigma$ is the diffeomorphism group $\Diff^{+}(F_{b})$. Then $\ker\rho$ consists of the diffeomorphisms of $X$, which preserve all fibers, hence it is homotopy equivalent to the space of smooth sections of $P$.  Denote the space of these sections by $\Gamma(P)$. Since the inclusion $SO(3)\hookrightarrow \Diff^{+}(S^2)$ is a homotopy equivalence \cite{Smale}, we have 
\begin{equation}\label{eq: diffS2}
\pi_{0}(\Diff^{+}(S^2))\cong 0,\     \pi_{1}(\Diff^{+}(S^2))\ \text{and }\pi_{2}(\Diff^{+}(S^2))\cong 0. 
\end{equation}
We fix a trivialization of $L|_{\operatorname{sk}_{1}\Sigma}$, which in turn induces a trivialization of $P|_{\operatorname{sk}_{1}\Sigma}\to \operatorname{sk}_{1}\Sigma$. Then for any $s\in \Gamma(P)$, the restriction $s|_{\operatorname{sk}_{1}(\Sigma)}$ can be expressed as a map from $\operatorname{sk}_{1}(\Sigma)$ to $\Diff^{+}(S^2)$. Therefore, any $s\in \Gamma(P)$ gives an element 
\[
[s|_{\operatorname{sk}_{1}(\Sigma)}]\in [\operatorname{Sk}_{1}(\Sigma),\Diff^{+}(S^2)]\cong \operatorname{Hom}(\pi_{1}(\Sigma),\mathbb{Z}/2)\cong H^{1}(\Sigma;\mathbb{Z}/2)
\]
We define the map 
\[
q_2: \pi_{0}(\ker\rho)\cong \pi_{0}(\Gamma(P))\to H^{1}(\Sigma;\mathbb{Z}/2)
\]
by $q_2([s])=[s|_{\operatorname{sk}_{1}(\Sigma)}]$. By (\ref{eq: diffS2}), two sections of $P$ are homotopic if and only if their restrictions to $\operatorname{sk}_{1}\Sigma$ are homotopic. Hence, the map $q_2$ is injective. 

Now we consider the composition $q_2\circ q_1: H^{1}(\Sigma;\mathbb{Z})\to \pi_0(\ker\rho)\to H^{1}(\Sigma;\mathbb{Z}/2)$.
For $\beta\in H^{1}(\Sigma;\mathbb{Z})$, let $s_{\beta}\in \Gamma(P)$ be the section that corresponds to $f_{\beta}$. Then $s_{\beta}|_{\operatorname{sk}_{1}\Sigma}$ is given by the composition
\[
\operatorname{sk}_{1}\Sigma\xrightarrow{g_{\beta}} S^{1}\hookrightarrow SO(3)\hookrightarrow \Diff^{+}(S^2).
\]
Therefore, $q_2\circ q_1$ is just the change-of-coefficient map. In particular, $q_1$ descends to a canonical isomorphism 
 $H^{1}(\Sigma;\mathbb{Z}/2)\xrightarrow{\cong} \pi_{0}(\ker\rho)$.
\end{proof}
\begin{proposition}\label{prop: MCGF}
The following results hold.
\begin{enumerate}
    \item If $X=S^2\times \Sigma$, then  $\pi_{0}(\Diff^{+}_{F}(X))\cong  \pi_0(\Diff^{+}(\Sigma))\ltimes H^{1}(\Sigma;\mathbb{Z}/2).$
    \item If $X=S^2\widetilde{\times}T^2$, then $\pi_{0}(\Diff^{+}_{F}(X))\cong \operatorname{SL}(2,\mathbb{Z})$.
\item If $X=S^2\widetilde{\times} \Sigma$ with $g(\Sigma)>1$, then we have a short exact sequence 
\begin{equation}\label{eq: short exact sequence}
0\to H^{1}(\Sigma,\mathbb{Z}/2)\to \pi_{0}(\Diff^{+}_{F}(X))\to \pi_0(\Diff^{+}(\Sigma))\to 0.    
\end{equation}
\end{enumerate}
\end{proposition}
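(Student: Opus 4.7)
All three statements will follow from the long exact sequence (\ref{eq: exact sequence for rho}) combined with the identification $\pi_{0}(\ker\rho)\cong H^{1}(\Sigma;\mathbb{Z}/2)$ of Lemma \ref{lem: ker rho}. The task in each part is to pin down the connecting homomorphism $\partial:\pi_{1}(\Diff^{+}(\Sigma),\id)\to H^{1}(\Sigma;\mathbb{Z}/2)$ and, where applicable, to exhibit a splitting of $\rho_{*}$.

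Parts (3) and (1) are essentially formal. For (3), when $g(\Sigma)>1$ the theorem of Earle--Eells asserts that each component of $\Diff^{+}(\Sigma)$ is contractible, so $\pi_{1}(\Diff^{+}(\Sigma),\id)=0$, forcing $\partial=0$, and (\ref{eq: exact sequence for rho}) truncates directly to the short exact sequence (\ref{eq: short exact sequence}). For (1), when $X=S^{2}\times \Sigma$, the assignment $g\mapsto \id_{S^{2}}\times g$ defines a group homomorphism $s:\Diff^{+}(\Sigma)\to\Diff^{+}_{F}(X)$ with $\rho\circ s=\id$, which both forces $\partial=0$ (any loop in the base lifts via $s$ to a loop upstairs) and splits (\ref{eq: exact sequence for rho}) as a semidirect product. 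The induced action of $\pi_{0}(\Diff^{+}(\Sigma))$ on $H^{1}(\Sigma;\mathbb{Z}/2)$ is read off by conjugating the generators $f_{\beta}$ of (\ref{eq: f-beta}) by $s(g)$: a direct calculation gives $s(g)\circ f_{\beta}\circ s(g)^{-1}=f_{(g^{-1})^{*}\beta}$, which is precisely the usual pullback action of the mapping class group on cohomology.

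Part (2) is the heart of the matter. By Earle's theorem, $\Diff^{+}(T^{2})\simeq T^{2}\rtimes \operatorname{SL}(2,\mathbb{Z})$, so $\pi_{0}(\Diff^{+}(T^{2}))\cong\operatorname{SL}(2,\mathbb{Z})$ and $\pi_{1}(\Diff^{+}(T^{2}),\id)\cong \mathbb{Z}^{2}$, generated by the translation loops $\alpha_{x},\alpha_{y}$ along the two cycles of $T^{2}=\mathbb{R}^{2}/\mathbb{Z}^{2}$. The claim reduces to showing $\partial:\mathbb{Z}^{2}\to H^{1}(T^{2};\mathbb{Z}/2)=(\mathbb{Z}/2)^{2}$ is surjective, for then $\operatorname{coker}\partial=0$ and (\ref{eq: exact sequence for rho}) yields the isomorphism $\pi_{0}(\Diff^{+}_{F}(X))\cong\operatorname{SL}(2,\mathbb{Z})$. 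To compute $\partial$ I present $L\to T^{2}$ via the standard cocycle $(x,y,z)\sim(x+1,y,z)$, $(x,y,z)\sim(x,y+1,e^{2\pi i x}z)$, realizing $c_{1}(L)=1$. The cocycle condition then forces the lift of the $x$-translation flow to take the form $\widetilde\gamma_{x}(t):(x,y,z)\mapsto(x+t,y,e^{2\pi i t y}z)$, whose time-$1$ value descends to the fiberwise gauge transformation $(x,y,z)\mapsto(x,y,e^{2\pi i y}z)$. Under the explicit isomorphism of Lemma \ref{lem: ker rho}, this represents the mod-$2$ reduction of $[dy]\in H^{1}(T^{2};\mathbb{Z})$; a symmetric computation along the $y$-cycle gives $\partial(\alpha_{y})=[dx]\bmod 2$. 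Surjectivity follows.

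The main obstacle is this explicit computation in part (2): one must carefully determine the forced horizontal lift of the translation flow through the twisted bundle and match the resulting gauge transformation with the specific generators $f_{\beta}$ used in Lemma \ref{lem: ker rho}. Surjectivity of $\partial$ ultimately reflects the parity of $c_{1}(L)=1$; were $c_{1}(L)$ even, the same calculation would give $\partial=0$ and the answer in (2) would instead acquire an extra $H^{1}(T^{2};\mathbb{Z}/2)$ summand, as in part (1).
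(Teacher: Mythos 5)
Your proposal is correct and follows essentially the same route as the paper: split the sequence (\ref{eq: exact sequence for rho}) via $g\mapsto \id_{S^2}\times g$ in case (1), use contractibility of the components of $\Diff^{+}(\Sigma)$ for $g(\Sigma)>1$ in case (3), and in case (2) show $\partial$ is surjective onto $H^{1}(T^2;\mathbb{Z}/2)$ by lifting the translation loops and reading off the resulting fiberwise gauge transformation. The only cosmetic difference is that the paper computes the lift via the holonomy of a unitary connection with normalized curvature, whereas you use an explicit cocycle model of the degree-one bundle; both reduce surjectivity to the oddness of $c_{1}(L)$.
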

\begin{proof} (1) Suppose $X=S^2\times\Sigma$, then the fibration (\ref{eq: rho}) has a section $s: \Diff^{+}(\Sigma)\to \Diff_{F}^{+}(X)$ defined by $s(g)= \operatorname{id}\times g$. Therefore, the long exact sequence (\ref{eq: exact sequence for rho}) turns into a split short exact sequence 
\[
0\rightarrow H^{1}(\Sigma;\mathbb{Z}/2)\to \pi_{0}(\Diff^{+}_{F}(X))\to \pi_{0}(\Diff^{+}(\Sigma))\to 0.
\]

(2) It suffices to prove that the map 
\[\partial :\pi_{1}(\Diff^{+}(T^2),\operatorname{id})\to \pi_{0}(\ker\rho)\] is surjective when $X=S^2\widetilde{\times }T^2$. 

For each $\theta\in S^1=\mathbb{R}/\mathbb{Z}$, we consider the loop $\gamma_{\theta}: S^1\to T^2 $ defined by $\gamma_{\theta}(\eta)=(\theta,\eta)$. We pick an $U(1)$-connection  $A$ on $L$ that satisfies the following two conditions:
\begin{enumerate}
    \item The curvature satisfies $\frac{i}{2\pi}F_{A}=d\theta\wedge d\eta$.
    \item The monodromy of $A$ along the loop $\gamma_{0}$ is trivial. 
\end{enumerate}
Such $A$ can be obtained by picking any connection $A_0$ and modifying it with a suitable 1-form $a\in i\Omega^{1}(T^2)$.

Consider the vector field $\mathcal{V}=\frac{\partial}{\partial \eta}$ on $T^2$. Let $\varphi_{t}: \Sigma\to \Sigma$ be the flow generated by $\mathcal{V}$. Then 
\begin{equation}\label{loop phi}
I\to \Diff^{+}(T^2),\ t\mapsto \varphi_{t}    
\end{equation}
is a loop in $\Diff^{+}(T^2)$. As in the proof of Lemma \ref{lem: rho serre fibration}, we can use $A$ to lift $\mathcal{V}$ to a vector field $\widehat{\mathcal{V}}$ on $L$ and then extend it to a vector field $\widetilde{\mathcal{V}}$ on $X$. Let $\widetilde{\varphi}_{t}: X\to X$ be the flow generated by $\widetilde{\mathcal{V}}$. Then the path 
\[
I\to \Diff^{+}(X), \quad t\mapsto \widetilde{\varphi}_{t}
\]
is a lift of the loop (\ref{loop phi}). Let $f\in \widetilde{\varphi}_{1}$. Then $[f]$ belongs to the image of $\partial$.  And $f$ is defined by parallel transportations along the loops $\{\gamma_{\theta}\}$. By our choice of $A$, the monodromy of $A$ along $\gamma_{\theta}$ equals $e^{2\pi i\theta}$. 
Therefore, we have $f(z)=e^{i \theta }z$ for all $z\in F_{(\theta,\eta)}$. As a result, under the isomorphism $\pi_0(\ker\rho )\to H^{1}(T^2;\mathbb{Z}/2)$ provided by Lemma \ref{lem: ker rho}, the class $[f]$ corresponds to the generator $(1,0)$. Similarly, the other generator $(0,1)$ also belongs to the image of $\partial$. 

(3) Note that each component of $\Diff^{+}(\Sigma)$ is contractible when $g(\Sigma)>1$. So $\pi_{1}(\Diff^{+}(\Sigma))=0$. The short exact sequence (\ref{eq: short exact sequence}) follows from (\ref{eq: exact sequence for rho}). 
\end{proof}

\begin{proposition} The map $i_{2}$ is injective. 
\end{proposition}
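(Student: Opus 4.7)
The plan is a two-stage reduction. First I would show that, modulo an isotopy within $\Diff^+_F(X)$, any hypothetical $f \in \Diff_F^+(X)$ lying in $\ker i_2$ can be assumed to lie in $\ker \rho$. Since $\pi_1(S^2)=0$, the projection $\pi$ induces an isomorphism $\pi_1(X) \cong \pi_1(\Sigma)$, and for fiber-preserving $f$ we have $f_* = \rho(f)_*$ under this identification. As $f$ is isotopic to $\mathrm{id}$ in $\Diff^+(X)$, $\rho(f)_*$ is trivial on $\pi_1(\Sigma)$. By the Dehn--Nielsen--Baer theorem the natural map $\pi_0(\Diff^+(\Sigma)) \to \mathrm{Out}(\pi_1(\Sigma))$ is injective for $g(\Sigma) \geq 1$, so $\rho(f)$ is isotopic to $\mathrm{id}$ in $\Diff^+(\Sigma)$. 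By the Serre fibration property in Lemma \ref{lem: rho serre fibration}, this isotopy lifts to an isotopy of $f$ through $\Diff_F^+(X)$ to an element of $\ker \rho$.

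Next, by Lemma \ref{lem: ker rho} the resulting element corresponds to some $\beta \in H^1(\Sigma; \mathbb{Z}/2)$, so up to isotopy in $\ker\rho$ we may take $f = f_\beta$. In Case 2 of Proposition \ref{prop: MCGF} the composition $\pi_0(\ker\rho) \to \pi_0(\Diff_F^+(X))$ is already zero, hence $[f]=0$ automatically. In Cases 1 and 3 that composition is injective, so it suffices to show that $f_\beta \simeq \mathrm{id}$ in $\Diff^+(X)$ forces $\beta = 0$.

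For this I would exploit the fact that $f_\beta$ pointwise fixes the zero section $\Sigma_0$. Picking an integer lift $\tilde\beta \in H^1(\Sigma;\mathbb{Z})$ of $\beta$, the derivative $df_\beta|_{N\Sigma_0}$ on the normal bundle is fiberwise rotation by $g_\beta(b)$, representing the class $\tilde\beta \in \pi_0 \Map(\Sigma, SO(2)) \cong H^1(\Sigma;\mathbb{Z})$. I would define a framing invariant $D:\pi_0\bigl(\Diff^+(X,\Sigma_0)\bigr) \to H^1(\Sigma;\mathbb{Z})$ on diffeomorphisms pointwise fixing $\Sigma_0$ by the homotopy class of $df|_{N\Sigma_0}$. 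The crucial claim is that the mod-$2$ reduction $\bar D$ descends through $\Diff^+(X,\Sigma_0) \hookrightarrow \Diff^+(X)$, so that $\bar D$ is a well-defined function on the subset of $\pi_0(\Diff^+(X))$ represented by diffeomorphisms fixing $\Sigma_0$. Granting this, $[f_\beta] = 0$ in $\pi_0(\Diff^+(X))$ gives $\bar D(f_\beta) = \bar D(\mathrm{id}) = 0$, while $\bar D(f_\beta) = \beta$, forcing $\beta = 0$.

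The main obstacle is the mod-$2$ descent of $\bar D$. Concretely, one must use parameterized isotopy extension to modify the given isotopy from $f_\beta$ to $\mathrm{id}$ so that it preserves $\Sigma_0$ setwise and then pointwise (correcting the residual loop in $\Diff^+(\Sigma_0)$ by a lift through $\Diff^+_F$), and then bound the resulting ambiguity coming from $\pi_1(\mathrm{Emb}(\Sigma,X)_{[\Sigma_0]})$. The expected reason the ambiguity lies in $2H^1(\Sigma;\mathbb{Z})$ rather than in all of $H^1(\Sigma;\mathbb{Z})$ is that $\pi_1(\Diff^+(S^2)) \cong \pi_1(SO(3)) \cong \mathbb{Z}/2$: a full rotation of an $S^2$ fiber is null-homotopic as a diffeomorphism of the fiber, so pulling $\Sigma_0$ once around a loop in $\Sigma$ contributes an even multiple to the normal framing class.
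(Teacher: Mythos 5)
Your first reduction (from $\ker i_2$ to $\ker\rho$ via the Serre fibration of Lemma \ref{lem: rho serre fibration}, then to $f=f_\beta$ via Lemma \ref{lem: ker rho}, with the twisted $T^2$ case dispatched by Proposition \ref{prop: MCGF}) is exactly what the paper does. The remaining task in Cases 1 and 3 is the same as in the paper: show that $f_\beta\simeq\mathrm{id}$ in $\Diff^+(X)$ forces $\beta=0$ in $H^1(\Sigma;\mathbb{Z}/2)$.

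Here your argument has a genuine gap, and it is precisely the step you yourself flag as ``the main obstacle'': the claim that the mod-$2$ normal-framing invariant $\bar D$ descends from $\pi_0(\Diff^+(X,\Sigma_0))$ to $\pi_0(\Diff^+(X))$. Everything rests on this descent, and what you offer for it is a heuristic ($\pi_1(\Diff^+(S^2))\cong\mathbb{Z}/2$ suggests the ambiguity is even), not a proof. To close it you would have to (i) show that an arbitrary isotopy from $f_\beta$ to $\mathrm{id}$ can be corrected to one through diffeomorphisms fixing $\Sigma_0$ pointwise together with a normal framing, and (ii) compute the image in $H^1(\Sigma;\mathbb{Z})$ of the resulting indeterminacy, which involves the boundary map of the fibration $\Diff^+(X,\Sigma_0)\to\Diff^+(X)\to\operatorname{Emb}(\Sigma,X)_{[\Sigma_0]}$ and hence $\pi_1$ of an embedding space of surfaces in a $4$-manifold --- exactly the kind of object that is hard to control and that the rest of this paper goes to some length (Dax invariants, etc.) to handle. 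As written, the conclusion $\bar D(f_\beta)=\bar D(\mathrm{id})$ is unjustified. The paper avoids this entirely with a closed-up invariant: it forms the mapping torus $Tf_\beta$, exhibits the torus $T=\gamma\times S^1$ (for $\gamma$ a curve with $\langle\beta,[\gamma]\rangle$ odd) with $w_2(NT)\neq 0$, and shows by a characteristic-class computation that every embedded torus in $X\times S^1$ has vanishing normal $w_2$; since an isotopy from $f_\beta$ to $\mathrm{id}$ would give $Tf_\beta\cong X\times S^1$, this yields the contradiction. Your framing class $\beta$ and the paper's $w_2(NT)$ are morally the same obstruction, but the mapping-torus packaging is what makes it a well-defined invariant of the isotopy class; I would recommend adopting that device rather than attempting the descent directly.
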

\begin{proof} Suppose $[f]\in \pi_{0}(\Diff^{+}_{F}(X))$ belongs to $\ker i_2$. Then the composition 
\[
\Sigma\xrightarrow{\sim}\Sigma_0\hookrightarrow X\xrightarrow{ f}X\xrightarrow{p} \Sigma
\]
is homotopic to $\operatorname{id}_{\Sigma}$, where the first map is taken as a section. On the other hand, this composition is exactly $\rho(f)$. Hence $\rho(f)$ is homotopic to $\operatorname{id}_{\Sigma}$. For diffeomorphisms on surfaces, homotopy implies isotopy, so $\rho(f)$ is isotopic to $\operatorname{id}_{\Sigma}$. By (\ref{eq: exact sequence for rho}), $[f]$ belongs to the image of $i_*$. 

Suppose  $X=S^2\widetilde{\times }T^2$. Then by Proposition \ref{prop: MCGF}, the map $i_*$ is trivial and the proof is finished. 

Suppose $X\neq S^2\widetilde{\times }T^2$. Since elements in $\ker(\rho_*)$ can be represented by diffeomorphisms acquired by \eqref{eq: f-beta}, we may assume $f=f_{\beta}$ for some $\beta\in H^{1}(\Sigma;\mathbb{Z})$ up to isotopy. We may also assume that $\beta\neq 0 \in H^{1}(\Sigma;\mathbb{Z}/2)$ because $[f]=0$ otherwise. Take a simple closed curve $\gamma\subset \Sigma_0$ such that
$\langle\beta, \gamma\rangle$ is odd. Consider the mapping torus \[Tf=(X\times [0,1])/(0,x)\sim (1,f(x)).\] 
Since $f$ is constructed as a fiberwise multiplication, it fixes $\gamma$ pointwisely. We therefore have an embedded torus 
\[T:=\gamma\times S^1\hookrightarrow Tf.\] 
The normal bundle of $T$, denoted by $NT$, is isomorphic the mapping torus
\[
(N\gamma\times [0,1])/(0,v)\sim (1,f_{*}(v)).
\]
Here $N\gamma$ is the normal bundle of $\gamma$ in $X$ and $f_*:N\gamma\to N\gamma$ is the differential of $f$. 
Since $\langle \beta,[\gamma]\rangle$ is odd, an explicit calculation shows that that $w_{2}(NT)\neq 0$.

We claim that that for any embedded torus $T'\hookrightarrow X\times S^1$, we have $w_{2}(NT')=0$. This would finish the proof because $Tf$ is not diffeomorphic to $X\times S^1$ and hence $[f]\notin \ker i_2$. 

Now we prove the claim. 
We first assume $X=\Sigma\times S^2$. Then $X\times S^1$ is spin. So we have \[w_{2}(NT')=w_{2}(T(X\times S^1)|_{T'})=0.\] 
Next, we assume $X=\Sigma\widetilde{\times }S^2$ with $g(\Sigma)>1$.  Consider the projection maps 
\[
X\times S^1\xrightarrow{p_1}X\xrightarrow{p}\Sigma.
\]
Since $TS^{1}$ is trivial, we have 
$w_{2}(T(X\times S^1))=p^*_{1}(w_{2}(TX))$. Consider the short exact sequence 
\[
0\to H^{2}(\Sigma)\xrightarrow{p^*}H^{2}(X)\xrightarrow{i^*} H^2(F)\to 0,
\]
where $i:F\to X$ is the inclusion of a fiber. Then $i^*(w_{2}(TX))=w_{2}(TF)=0$. So $\omega_{2}(TX)$ is pulled back from $\Sigma$. This further implies that $w_{2}(T(X\times S^1))$ is pulled back from the projection map $p\circ p_1: X\times S^1\to \Sigma$.

Therefore, $w_{2}(NT')=w_{2}(T(X\times S^1)|_{T'})$ is pulled back from the map 
\[
T'\hookrightarrow X\times S^1\xrightarrow{ \operatorname{p_1}} X\xrightarrow{p}\Sigma.
\]
But this map must have mapping degree zero because $g(\Sigma)>g(T')$. Hence $w_{2}(NT')=0$. This finishes the proof.
\end{proof}

\begin{proposition}\label{prop: smooth isotopy of symplectormorphisms} The image of $i_1$ equals the image of $i_2$.
\end{proposition}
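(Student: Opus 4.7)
The plan is to prove the two inclusions $\im(i_1)\subseteq\im(i_2)$ and $\im(i_2)\subseteq\im(i_1)$ separately, using the theory of $J$-holomorphic spheres on ruled surfaces for the first, and a combination of a homological computation, a Thurston-type interpolation, and Moser's trick for the second.

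For $\im(i_1)\subseteq\im(i_2)$, take $\phi\in\Symp(X,\omega_\mu)$. Then $\phi_*J_0$ is another $\omega_\mu$-compatible almost complex structure, so one can connect $J_0$ to $\phi_*J_0$ by a smooth path $\{J_t\}_{t\in[0,1]}$ in the contractible space $\mathcal{J}(\omega_\mu)$. By the Gromov-McDuff theory for ruled surfaces, for each $t$ the class $[F]$ has a unique $J_t$-holomorphic representative through any point of $X$, and these sweep out a smooth $J_t$-holomorphic foliation of $X$ by $2$-spheres, hence a smooth fibration $\pi_t:X\to\Sigma$ with $\pi_0=\pi$ and $\pi_1=\pi\circ\phi^{-1}$ (after natural identifications of bases with $\Sigma$). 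A parametric version of Ehresmann's fibration theorem applied to $\{\pi_t\}$ then produces a smooth isotopy $\psi_t\in\Diff^+(X)$ with $\psi_0=\id$ and $\pi_t\circ\psi_t=\pi$. At $t=1$ this gives $\pi\circ(\phi^{-1}\circ\psi_1)=\pi$, so $\psi_1^{-1}\circ\phi\in\Diff^+_F(X)$, and $\psi_t^{-1}\circ\phi$ is a smooth isotopy from $\phi$ to this fiber-preserving representative.

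For $\im(i_2)\subseteq\im(i_1)$, take $f\in\Diff^+_F(X)$. First, I would establish that $f_*$ acts as the identity on $H_2(X;\mathbb Z)\cong\mathbb Z\langle[F],[\Sigma_0]\rangle$: since $f$ is fiber- and orientation-preserving, $f_*[F]=[F]$; and $f(\Sigma_0)$ is a smooth section of $\pi$, so $f_*[\Sigma_0]=[\Sigma_0]+k[F]$ for some $k\in\mathbb Z$; preservation of the intersection form by $f_*$ then yields $[\Sigma_0]^2=([\Sigma_0]+k[F])^2=[\Sigma_0]^2+2k$, forcing $k=0$. Hence $f^*[\omega_\mu]=[\omega_\mu]$. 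Next, both $\omega_\mu$ and $f^*\omega_\mu$ make every fiber $F_b$ symplectic (as $f$ is fiber-preserving), and I would show they lie in the same path component of $\mathcal{S}$ by a Thurston-type argument: for an area form $\omega_\Sigma$ on $\Sigma$ and $c$ sufficiently large, the family $(1-s)\omega_\mu+s f^*\omega_\mu+c\pi^*\omega_\Sigma$ is symplectic for all $s\in[0,1]$ (the $c$-term dominates the possibly indefinite cross term), and concatenating with the obvious linear paths in $c$ joins $\omega_\mu$ to $f^*\omega_\mu$ through symplectic forms. Combined with $f^*[\omega_\mu]=[\omega_\mu]$, the Lalonde-McDuff-Jiang uniqueness result recalled in the introduction gives a symplectic isotopy from $\omega_\mu$ to $f^*\omega_\mu$ through cohomologous forms, and Moser's trick then yields $h_1\in\Diff(X)$ isotopic to $\id$ with $h_1^*\omega_\mu=f^*\omega_\mu$. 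Thus $f\circ h_1^{-1}\in\Symp(X,\omega_\mu)$ is smoothly isotopic to $f$, placing $[f]\in\im(i_1)$.

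The most delicate point is the parametric Ehresmann step in the first inclusion, which relies on smooth dependence of the $J_t$-holomorphic moduli on $t$, a standard consequence of Gromov-McDuff theory on ruled surfaces. The second inclusion is conceptually cleaner but genuinely requires invoking the cohomologous-plus-homotopic criterion for isotopy from Lalonde-McDuff-Jiang, in addition to the homological rigidity and Thurston-type interpolation.
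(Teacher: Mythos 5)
Your proposal is correct and follows essentially the same route as the paper: for $\im(i_1)\subseteq\im(i_2)$ the paper likewise connects $J_0$ to $\phi_*J_0$, uses the family of $J_t$-holomorphic fiber foliations from Gromov--McDuff theory, and trivializes the resulting bundle over $X\times I$ (via the evaluation map rather than your ``parametric Ehresmann'' phrasing, but this is the same mechanism); for $\im(i_2)\subseteq\im(i_1)$ it likewise invokes the Lalonde--McDuff uniqueness results to connect $f^*\omega_\mu$ to $\omega_\mu$ through cohomologous forms and then applies Moser. Your explicit verification that $f^*[\omega_\mu]=[\omega_\mu]$ and the Thurston-type interpolation are details the paper leaves implicit, but they do not change the argument.
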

\begin{proof} We first prove that $\operatorname{im}(i_{2})\subset \operatorname{im}(i_{1})$.  This amounts to showing that any $f\in \Diff^{+}_{F}(X)$ is smoothly isotopic to a symplectomorphism. Let $\omega=f^{*}(\omega_{\mu})$. Then $[\omega]=[\omega_{\mu}]$ and they are compatible with the fibration structure on $X$.  By a result of Lalonde-McDuff \cite{LalondeMcDuffJ}, there exists a path of cohomologous symplectic forms that connects  $\omega$ with $\omega_{\mu}$. Applying Moser's argument, this gives a diffeomorphism $g: M\to M$ that is smoothly isotopic to the identity and satisfies $g^{*}(\omega)=\omega_{\mu}$. Then $g\circ f\in \Symp(X,\omega_\mu)$ is smoothly isotopic to $f$. 

Next, we prove that $\operatorname{im}(i_{1})\subset \operatorname{im}(i_{2})$. This amounts to showing that any symplectomophism is smoothly isotopic to a fiber preserving diffeomorphism. Let $f:(X,\omega_{\mu})\to (X,\omega_{\mu})$ be a symplectomorphism. Take a product almost complex structure $J_0$, and denote $J_t$ as a path of almost complex structure connecting $J_0$ and $J_1=f_*(J_0)$.  Define the moduli space

\begin{equation}
    \eM_I:=\{(u,J_t)|u: S^2\to X, \partial_{J_t}u=0, [u]=[S^2]\}.
\end{equation}

Given $x\in X$, there is a unique regular embedded $J_t$-holomorphic sphere passing through $x$ for any $t\in [0,1]$ \cite{LalondeMcDuff}.  Therefore, we have the following fibration

\begin{equation}
    \PSL(2,\CC)\to \eM_I\xrightarrow{\mathfrak{f}} \Sigma\times I.
\end{equation}

Here $\mathfrak{f}$ is the quotient by the natural $\PSL(2,\CC)$-action, which is equivalent to forgetting the parametrization of the curve $S^2$.

Consider $\eM_S:=\eM_I\times_{\PSL(2,\CC)}S^2$.  There is a natural diffeomorphism $\varphi: \eM_S\rightarrow X\times I$ which respects the projection to $I$.  Consider also the evaluation map

\begin{align}
  \ev_I:\quad &\eM_S\to X,\\
         &((u,t),x)\mapsto u(x).
\end{align}

From the fact that the $[S^2]$-curves form a foliation of $X$ for every $J_t$, $\ev_I$ is also a diffeomorphism which respects the projection to $I$.  Therefore, we consider the following composition:
\begin{equation}
    X\times I\xrightarrow{\varphi^{-1}}\eM_S\xrightarrow{\ev_I}X.
\end{equation}
The composition $\ev_I\circ\varphi^{-1}$ yields a smooth isotopy from $f_0:=\ev_I\circ\varphi^{-1}|_{X\times \{0\}}$ to $f_1:\ev_I\circ\varphi^{-1}|_{X\times \{1\}}$, where $f_0$ preserves the standard foliation given by the standard product structure $X\cong S^2\times\Sigma$, while $f_1$ sends the standard foliation to the foliation given by $J_1$-curves.  

 Therefore, $(\ev_I\circ\varphi^{-1})\circ (f_0^{-1}\times id_I)$ is a smooth isotopy from $id_X$ to $f_0^{-1}\circ f_1$, and the latter can be written as $f\circ \alpha$ for some $\alpha\in\Diff^{+}_F(X)$.  This concludes our proof.
\end{proof}

\begin{corollary} Let $X$ be a ruled surface over $T^2$. Then we have 
\[
\pi_0(\Symp(X,\omega_{\mu}))\cong \begin{cases} \operatorname{SL}(2;\mathbb{Z})\ltimes (\mathbb{Z}/2\oplus \mathbb{Z}/2), &\text{if } X=S^2\times T^2;\\
\operatorname{SL}(2;\mathbb{Z}), &\text {if } X=S^2\widetilde{\times}T^2.
\end{cases}
\]
\end{corollary}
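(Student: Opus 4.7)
By Proposition 2.5, $i_2$ is injective, and by Proposition 2.6, $\operatorname{im}(i_1) = \operatorname{im}(i_2)$. My plan is to combine these into a canonical surjective homomorphism
\[
\Psi := i_2^{-1}\circ i_1 \colon \pi_0(\Symp(X,\omega_\mu))\twoheadrightarrow \pi_0(\Diff^{+}_F(X)),
\]
well defined since $i_1([f])$ lands in $\operatorname{im}(i_2)$ for every $[f]\in\pi_0(\Symp)$ and $i_2$ is injective. Specializing Proposition 2.3 to $\Sigma = T^2$ and using $\pi_0(\Diff^+(T^2))\cong\SL(2;\mathbb{Z})$ together with $H^1(T^2;\mathbb{Z}/2)\cong \mathbb{Z}/2\oplus\mathbb{Z}/2$, I identify the codomain with $\SL(2;\mathbb{Z})\ltimes(\mathbb{Z}/2\oplus\mathbb{Z}/2)$ in the trivial case and with $\SL(2;\mathbb{Z})$ in the twisted case, matching the answers in the statement.

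It then remains to establish that $\Psi$ is injective, i.e., every $f\in\Symp(X,\omega_\mu)$ smoothly isotopic to $\operatorname{id}$ is also symplectically isotopic to $\operatorname{id}$. The approach is a Moser-type argument: given a smooth isotopy $\{f_t\}$ from $\operatorname{id}$ to $f$, the pull-backs $\omega_t := f_t^*\omega_\mu$ form a loop of cohomologous symplectic forms, since each $f_t$ is smoothly isotopic to $\operatorname{id}$ and hence acts trivially on $H^2(X;\mathbb{R})$. Applying Moser's trick to the path $\omega_t$ yields $\{g_t\}\subset\Diff^+(X)$ with $g_0 = \operatorname{id}$ and $(f_t\circ g_t)^*\omega_\mu = \omega_\mu$, which produces a symplectic isotopy from $\operatorname{id}$ to $f\circ g_1$. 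Since $\omega_1 = \omega_\mu$, the endpoint $g_1$ itself lies in $\Symp(X,\omega_\mu)$ and is smoothly isotopic to $\operatorname{id}$; one then needs to promote this smooth isotopy of $g_1$ to a symplectic one, which I would do by invoking a parametric version of the $J$-holomorphic sphere argument from the proof of Proposition 2.6, tracking an $\omega_\mu$-compatible almost complex structure along the isotopy and using the contractibility of the space of $\omega_\mu$-compatible almost complex structures to interpolate.

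The main obstacle is precisely this last injectivity step. All the group-theoretic and homotopical input needed for the identification of the codomain is supplied by Propositions 2.3, 2.5, and 2.6, so the technical heart of the corollary lies in upgrading the smooth-isotopy conclusion of Proposition 2.6 to a genuine symplectic-isotopy statement. The parametric $J$-holomorphic sphere framework used throughout this section appears well-suited to this refinement, but the argument requires a careful simultaneous control of the ruling, the compatible almost complex structure, and the symplectic form along the entire isotopy.
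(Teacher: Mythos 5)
Your identification of the codomain and the construction of the surjection $\Psi=i_2^{-1}\circ i_1$ are correct and match the paper's set-up exactly. The problem is the injectivity step, which is where your argument breaks down. First, the Moser portion is circular: after producing the symplectic isotopy from $\operatorname{id}$ to $f\circ g_1$, you are left needing to show that $g_1$ (a symplectomorphism of $\omega_\mu$ that is smoothly isotopic to the identity) is \emph{symplectically} isotopic to the identity --- but that is precisely the statement you set out to prove for $f$. No progress has been made. Second, the proposed repair via a ``parametric version of the $J$-holomorphic sphere argument from Proposition \ref{prop: smooth isotopy of symplectormorphisms}'' cannot work as sketched: that argument only produces \emph{smooth} isotopies (it moves the ruling by an arbitrary diffeomorphism, not a symplectomorphism), and upgrading it to a symplectic isotopy requires controlling the homotopy type of the space of compatible almost complex structures together with the fiberwise symplectomorphism groups. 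That is the content of the symplectic isotopy problem for ruled surfaces, which the Remark immediately following this corollary points out is \emph{open} in general.

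What the paper actually does is cite the injectivity of $i_1$ as a known theorem in the genus-one case: McDuff for $X=S^2\times T^2$ and Shevchishin--Smirnov for $X=S^2\widetilde{\times}T^2$. These are substantial results in their own right, not consequences of Propositions \ref{prop: MCGF}--\ref{prop: smooth isotopy of symplectormorphisms}. So the correct completion of your proof is: invoke those two theorems to conclude $i_1$ is injective, hence $\Psi$ is an isomorphism, and then apply your (correct) computation of $\pi_0(\Diff^{+}_F(X))$ from Proposition \ref{prop: MCGF}. As written, your proposal leaves the genuinely hard step unproved.
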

\begin{proof} A result McDuff \cite{McDuffALC} (when $X=S^2\times T^2$) and the result of Shevchishin-Smirnov \cite{ShevchishinElliptic} (when $X=S^2\widetilde{\times}T^2$) states that $i_{1}$ is injective. Therefore, $\pi_0(\Symp(X,\omega_{\mu}))\cong \pi_{0}(\Diff^{+}_{F}(X))$.
\end{proof}
\begin{remark}
The symplectic isotopy conjecture for ruled surface \cite{McDuffALC} asserts that the map $i_{1}$ is injective for any ruled surface. This conjecture is still open.    
\end{remark}

The following proposition is independently known to Hind-Li \cite{HLpreprint}. 

\begin{proposition}\label{proposition symplectic suface standard}
Let $\Sigma_1, \Sigma_2$ be embedded, connected symplectic surfaces in $(X,\omega_{\mu})$. Assume 
\[
[\Sigma_1]=[\Sigma_2]=[\Sigma_0]+k[F]
\]
for some $k\in \mathbb{Z}$. Then $\Sigma_1$ is smoothly isotopic to $\Sigma_2$. 
\end{proposition}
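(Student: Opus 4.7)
The strategy is to exploit $[\Sigma_i]\cdot[F]=1$ by realizing each $\Sigma_i$ as a smooth section of an $S^2$-fibration on $X$ coming from Gromov--McDuff theory, and then classifying sections of an $S^2$-bundle over $\Sigma$ up to isotopy. For $i\in\{1,2\}$, I would pick an $\omega_\mu$-compatible almost complex structure $J_i$ making $\Sigma_i$ into a $J_i$-holomorphic curve (possible since $\Sigma_i$ is symplectic). By the Gromov--McDuff theory for minimal ruled surfaces \cite{McDuffRuled,LalondeMcDuff}, for any $\omega_\mu$-compatible $J$ the moduli space of embedded $J$-holomorphic rational curves in the fiber class $[F]$ smoothly foliates $X$, descending to a smooth $S^2$-fibration $\pi_J:X\to\Sigma$ (regularity is automatic from $c_1(X)\cdot[F]=2$, and bubbling is ruled out by minimality). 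Applying positivity of intersections to the $J_i$-holomorphic surfaces $\Sigma_i$ and the fibers of $\pi_{J_i}$ (whose algebraic intersection is $1$) forces $\Sigma_i$ to meet every leaf transversally in exactly one point, hence to be a smooth section of $\pi_{J_i}$.

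Next, I would connect $J_1$ to $J_2$ by a path $J_t$ of $\omega_\mu$-compatible almost complex structures and run the parametric moduli space argument already exploited in Proposition~\ref{prop: smooth isotopy of symplectormorphisms}. Its output here is a smooth ambient isotopy $\Phi_t:X\to X$ with $\Phi_0=\id$ sending the foliation by $\pi_{J_1}$-fibers to the foliation by $\pi_{J_2}$-fibers. Consequently $\Phi_1(\Sigma_1)$ is a smooth section of $\pi_{J_2}$, and since ambient isotopies preserve homology classes, $[\Phi_1(\Sigma_1)]=[\Sigma_1]=[\Sigma_2]$.

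It then suffices to show that two smooth sections of the $S^2$-bundle $\pi_{J_2}:X\to\Sigma$ representing the same homology class are smoothly isotopic. This is standard obstruction theory: $\pi_1(S^2)=0$ makes any two sections fiberwise homotopic over the $1$-skeleton of $\Sigma$, and the primary obstruction to extending the homotopy over the top cell lies in $H^{2}(\Sigma;\pi_2(S^2))\cong\Z$, which is exactly the difference of the two homology classes in $X$ and therefore vanishes. A homotopy of sections then upgrades to an ambient smooth isotopy of $X$, completing the proof. The main obstacle will be the second step: constructing $\Phi_t$ requires carefully carrying out the parametric Gromov--McDuff argument so that the parametrized moduli space of $J_t$-holomorphic fibers is a smooth manifold diffeomorphic to $X\times I$ via the evaluation map, exactly as in the proof of Proposition~\ref{prop: smooth isotopy of symplectormorphisms}.
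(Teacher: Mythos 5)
Your proposal is correct and follows essentially the same route as the paper: make each $\Sigma_i$ a section of the $S^2$-fibration induced by a compatible $J_i$ via positivity of intersections, use the parametric moduli space over a path $J_t$ (as in Proposition~\ref{prop: smooth isotopy of symplectormorphisms}) to carry one fibration to the other by an ambient isotopy, and finish by isotoping homologous sections of a fixed $S^2$-bundle. The only difference is cosmetic --- the paper packages the middle step as homotopy invariance of the pulled-back bundle over $\Sigma\times[1,2]$ rather than as an ambient isotopy $\Phi_t$, and it simply asserts the final step on homologous sections, for which you supply the obstruction-theoretic justification.
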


\begin{proof}
By the adjunction formula, we have $g(\Sigma_{1})=g(\Sigma_{2})=g(\Sigma)$.
By the symplectic neighborhood theorem, there exist $\omega_{\mu}$-compatible almost complex structure $J_{1}, J_2$ on $X$ such that $\Sigma_{1}$ is $J_{1}$-holomorphic and $\Sigma_2$ is $J_2$-holomorphic. We connect $J_1,J_2$ by  a path $\{J_{t}\}_{1\leq t\leq 2}$ of $\omega_{\mu}$-compatible almost complex structures. Then, as in the proof of  Proposition \ref{prop: smooth isotopy of symplectormorphisms}, we have a smooth fiber bundle 
\[
S^2\hookrightarrow X\times [1,2]\xrightarrow{\pi} \Sigma\times [1,2]
\]
For $t\in [1,2]$, we use $\pi_{t}:X\to \Sigma$ to denote the bundle obtained by restricting $\pi$ to $X\times \{t\}$. By positivity of intersections for $J$-holomorphic curves, the surface $\Sigma_{1}$ (resp. $\Sigma_2$) is a smooth section of the bundle $\pi_{1}$ (resp. $\pi_{2}$).

Now we consider the composition  
\begin{equation}\label{eq: projection}
\Sigma\times [1,2]\xrightarrow{\operatorname{projection}} \Sigma\times \{1\}\hookrightarrow \Sigma\times [1,2].    
\end{equation}
If we pull back the bundle $\pi$ using this map, we will get the bundle 
\[
S^2\hookrightarrow X\times [1,2]\xhookrightarrow{\pi_{1}\times \operatorname{id}} \Sigma\times [1,2].
\]
On the other hand, the map (\ref{eq: projection})  is homotopic to $\operatorname{id}_{\Sigma\times \{1,2\}}$. By homotopy invariance of pull-back bundles, the bundles $\pi$ and $\pi_{1}\times \operatorname{id}$ are isomorphic. In other words, there exists a diffeomorphism $h: X\times [1,2]\to X\times [1,2]$ that fits into the commutative diagram 
\[
\xymatrix
{X\times [1,2]\ar[rd]_{\pi_{1}\times \operatorname{id}}\ar[rr]^{h}_{\cong}& & X\times [1,2]\ar[ld]^{\pi}\\
& \Sigma\times [1,2] &}
\]
By replacing $h$ with $h\circ( h|_{X\times \{1\}}\times \operatorname{id})^{-1}$, we may assume $h$ restricts to the identity map on $X\times \{1\}$. Let $f:X\to X$ be defined by restricting $h$ to $X\times \{2\}$. Then $f$ is isotopic to $\operatorname{id}_{X}$ and fits into the commutative diagram 
\[
\xymatrix
{X\ar[rd]_{\pi_{1}}\ar[rr]^{f}_{\cong}& & X\ar[ld]^{\pi_{2}}\\
& \Sigma&}
\] 
Therefore, $f(\Sigma_{1})$ and $\Sigma_2$ are both smooth sections of the bundle $X\xrightarrow{\pi_{2}}\Sigma$. Since $f(\Sigma_{1})$ is homologous to $\Sigma_{2}$, and any two homologous sections of the same bundle $X\to \Sigma$ are smoothly isotopic, we see that $f(\Sigma_1)$ is smoothly isotopic to $\Sigma_{2}$. Hence $\Sigma_1$ is smoothly isotopic to $\Sigma_2$.
 \end{proof}

\section{The construction of exotic symplectic forms}

We recall the barbell diffeomorphism \[\varphi: (S^2\times D^2)\natural (S^2\times D^2)\to (S^2\times D^2)\natural (S^2\times D^2)\] defined by Budney-Gabai \cite{budney2019knotted}. Fix a small $\epsilon>0$ and a bump function $\rho: [-1,1]\to [0,\epsilon]$ that equals $1$ near $0$ and is supported on $[-\epsilon,\epsilon]$. For any $\theta\in \mathbb{R}/2\pi \mathbb{Z}$, we consider the embedding 
\[i^{\theta}=i^{\theta}_{1}\cup i_{2}: [-1,1]\sqcup [-1,1]\hookrightarrow D^4\] defined by 
\[
i^{\theta}_{1}(t)=(t,0,\cos\theta\cdot \tau(t), \sin\theta\cdot \tau(t))\text{ and }i_{2}(t)=(0,t,0,0).
\]
One can obtain a trivialization of the normal bundle of $i^{\theta}_{1}$ (resp. $i_{2}$) by projecting it to the plane $x=0$ (resp. $y=0$). This makes the family $\{i^{\theta}\}_{\theta\in S^1}$ a loop in $\operatorname{Emb}^{\operatorname{fr}}_{\partial}(I\sqcup I,D^4)$, the space of framed embeddings $I\sqcup I\to D^4$ fixed near the boundary. Applying the isotopy extension theorem to this loop, one obtains a diffeomorphism $\phi: D^4\to D^4$ that fixes a tubular neighborhood of $\nu(\operatorname{im}(i^{0}))$. By restricting $\phi$ to the complement \[D^{4}\setminus \nu(\operatorname{im}(i^{0}))\cong (S^{2}\times D^2)\natural (S^{2}\times D^2),\] one obtains the barbell diffeomorphism $\varphi$. 

Now we construct an embedding $(S^{2}\times D^2)\natural (S^{2}\times D^2)\hookrightarrow X$ as follows: Let $S_{0}=S^2\hookrightarrow  X$ be a fiber. Let $b=S_0\cap \Sigma_0$. Take a small embedded disk $D=D^{3}\hookrightarrow X\setminus S_{0}$ and let $S_1=\partial D$. Take an embedded arc $\gamma: [0,1]\to X$ that satisfies the following conditions:
\begin{itemize}
    \item $\gamma(0)= b$, $\gamma_{1}\in S_{1}$, $\gamma(0,1)\cap (S_{0}\cup S_{1})=\emptyset$;
    \item $\gamma$ intersects the interior of $D$ transversely at a single point $\gamma(\frac{1}{2})$. 
    \item Take any path in $\gamma':I\to D$ from $\gamma(1)$ to $\gamma(\frac{1}{2})$. Then $\widetilde{\gamma}:=\gamma|_{[\frac{1}{2},1]}\cdot \gamma'$ is a loop in $X$. We require that $\tilde{\gamma}$ is noncontractible.
    \item In the case $X=S^2\widetilde{\times}\Sigma$, we additionally require that the preimage of $\widetilde{\gamma}$ in $S^2\times \widetilde{\Sigma}$ has two components, denoted by $\widetilde{\gamma}_{1}$ and $\widetilde{\gamma}_{2}$.

\end{itemize}
We denote $S_0\cup \operatorname{im}(\gamma)\cup S_{1}$ by $\mathcal{B}_{\gamma}$ and call it a self-referential barbell. The trivializations of the normal bundle of $S_0$ and $S_1$ are unique up to homotopy. We pick any trivialization of the normal bundle of $\operatorname{im}(\gamma)$. After fixing these trivializations, we obtain a diffeomorphism between an $(S^2\times D^2)\natural (S^2\times D^2)$ and $\nu(\mathcal{B}_{\gamma})$ (a tubular neighborhood of $\mathcal{B}_{\gamma}$). Then we can push forward $\varphi$ using this diffeomorphism and extend it by the identity map on $X\setminus \nu(\mathcal{B}_{\gamma})$. This gives a diffeomorphism $f_{\gamma}: X\to X$ that is homotopic to the identity. We call it a \emph{self-referential barbell diffeomorphism}. 

Consider the surface $\Sigma_{1}=f_{\gamma}(
\Sigma_0)$. Take a tube along $\gamma$. Then $\Sigma_{1}$ is just the internal connected sum of $\Sigma_0$ and $S_1$ along this tube. We say that $\Sigma_{1}$ is obtained from $\Sigma_0$ by adding a self-referential disk along $\gamma$. 
Similarly, for any $m>1$, we let $\Sigma_{m}=f^{m}_{\gamma}(
\Sigma_0)$. Then $\Sigma_m$ is obtained from $\Sigma_0$ by adding $m$ self-referential disks along copies of $\gamma$.
These terminologies come from \cite{Gabaiself}.

\begin{proposition}\label{prop: Sigma_0 moved by self-ref diff}
For any $m>0$, the surface $\Sigma_m$ is not smoothly isotopic to $\Sigma_0$.    
\end{proposition}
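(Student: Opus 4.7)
The plan is to distinguish the surfaces $\Sigma_m$ smoothly using the Dax invariant for embedded surfaces in $4$-manifolds, following the framework developed by Gabai for self-referential disks \cite{Gabaiself}. Since $f_\gamma$ is homotopic to the identity, all of the $\Sigma_m$ lie in the same homotopy class as $\Sigma_0$, so any homotopy-theoretic obstruction vanishes; the Dax invariant is precisely designed to separate isotopy classes within a fixed homotopy class of embeddings.

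First, I would recall the relative Dax invariant. For two smoothly homotopic embedded surfaces $\Sigma',\Sigma_0 \subset X$, choose a generic regular homotopy between them; each of its finitely many double points gives a based loop in $X$, hence an element of $\pi_1(X)$. Summing these with signs yields a class
\[
\operatorname{Dax}(\Sigma',\Sigma_0)\in \mathbb{Z}[\pi_1(X)]\big/\mathcal{I},
\]
where the indeterminacy $\mathcal{I}$ encodes the dependence on the choice of regular homotopy (in particular, it contains the trivial element and the contributions from the natural $\pi_1(\Sigma_0)$-action on loops). This class vanishes whenever $\Sigma'$ is smoothly isotopic to $\Sigma_0$.

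Next, I would compute $\operatorname{Dax}(\Sigma_m,\Sigma_0)$. By construction, $\Sigma_1 = f_\gamma(\Sigma_0)$ is the internal connect sum of $\Sigma_0$ with the small sphere $S_1$ along a tube about $\gamma$, and a standard regular homotopy realizing the self-referential barbell construction has a single self-intersection whose associated loop is precisely $\widetilde{\gamma}$. Iterating gives $\Sigma_m$ with $m$ parallel self-referential tubes, each contributing a copy of $[\widetilde{\gamma}]$, so
\[
\operatorname{Dax}(\Sigma_m,\Sigma_0) = m\cdot[\widetilde{\gamma}] \pmod{\mathcal{I}}.
\]

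The main obstacle is to verify that $m\cdot[\widetilde{\gamma}]$ is nonzero in $\mathbb{Z}[\pi_1(X)]/\mathcal{I}$ for every $m>0$. In the untwisted case $X = S^2\times \Sigma$, we have $\pi_1(X) \cong \pi_1(\Sigma)$, which is torsion-free, and the noncontractibility of $\widetilde{\gamma}$ guaranteed by the third bullet in the construction ensures that $[\widetilde{\gamma}]$ represents a nontrivial group element; an explicit analysis of $\mathcal{I}$ modeled on Gabai's computation for self-referential disks then shows that $[\widetilde{\gamma}]$ has infinite order in the quotient. In the twisted case $X = S^2\widetilde{\times}\Sigma$, the last bullet of the construction lets us pass to the double cover $S^2 \times \widetilde{\Sigma}$: the two disjoint lifts $\widetilde{\gamma}_1,\widetilde{\gamma}_2$ contribute independently to the Dax invariant of the lifted surface, and the product-case computation on the cover again yields nontriviality of the sum. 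In either case $\operatorname{Dax}(\Sigma_m,\Sigma_0)\ne 0$, so $\Sigma_m$ is not smoothly isotopic to $\Sigma_0$.
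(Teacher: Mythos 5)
Your proposal follows essentially the same route as the paper: distinguish $\Sigma_m$ from $\Sigma_0$ via the relative Dax invariant, import Gabai's local computation for self-referential disks by naturality under codimension-zero embeddings, and handle the twisted case by passing to the double cover $S^2\times\widetilde{\Sigma}$. The only substantive difference is that where you leave the nonvanishing to ``an explicit analysis of $\mathcal{I}$,'' the paper sidesteps this by citing the precise target group $\mathbb{Z}\langle\mathcal{C}\rangle$ (the free abelian group on noncontractible free homotopy classes) from \cite{lin2025dax}, so that the symmetrized value $m([\widetilde{\gamma}]+[\widetilde{\gamma}^{-1}])$ is visibly nonzero directly from the noncontractibility of $\widetilde{\gamma}$.
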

\begin{proof} We first consider the case $X=S^2\times \Sigma$. In \cite{lin2025dax}
Xie, Zhang, and the first author defined the relative Dax invariant for any two embedded surfaces in $X$ that are both homotopic to $\Sigma_0$. This invariant takes value in $\mathbb{Z}\langle \mathcal{C}\rangle$, where $\mathcal{C}$ is the set of free homotopy classes of noncontractible loops in $X$. Consider the relative Dax invariant $\operatorname{Dax}(\Sigma_0,\Sigma_m)\in \mathbb{Z}\langle \mathcal{C}\rangle $. In \cite{Gabaiself}, Gabai computed the relative Dax invariant for self-referential disks in $(S^2\times D^2)\natural (S^1\times D^3)$. By naturality of the relative Dax invariant under codimension-0 embeddings (see \cite[Proof of Theorem 2.1]{lin2025dax}), we have 
\[
\operatorname{Dax}(\Sigma_0,\Sigma_m)=m([\widetilde{\gamma}]+[\widetilde{\gamma}^{-1}])\neq 0.
\] 
Therefore, $\Sigma_m$ is not smoothly isotopic to $\Sigma_0$.

We prove the case $X=S^2\widetilde{\times }\Sigma$ by passing to the double cover $p:S^2\times \widetilde{\Sigma}\to \Sigma$. Let $\widetilde{\Sigma}_{m}=p^{-1}(\Sigma_{m})$. Then $\widetilde{\Sigma}_{m}$ is obtained from $\widetilde{\Sigma}_0$ by adding $m$ self-referential disks along $\gamma_1$ and $m$ self-referential disks along $\gamma_2$. Here $\gamma_{1},\gamma_{2}$ denotes the two components of $p^{-1}(\gamma)$. Then 
\[
\operatorname{Dax}(\widetilde{\Sigma}_{0},\widetilde{\Sigma}_{m})=m([\widetilde{\gamma}_{1}]+[\widetilde{\gamma}^{-1}_{1}]+[\widetilde{\gamma}_{2}]+[\widetilde{\gamma}^{-1}_{2}])\neq 0\in \mathbb{Z}\langle \widetilde{\mathcal{C}}\rangle
\]
Here $\widetilde{\mathcal{C}}$ denotes the set of free homotopy classes of non-contractible loops in $S^2\times \widetilde{\Sigma}$. Hence $\widetilde{\Sigma}_{m}$ is not smoothly isotopic to $\widetilde{\Sigma}_0$. And  $\Sigma_0$ is not smoothly isotopic to $\Sigma_m$ as well. 
\end{proof}

By Proposition \ref{proposition symplectic suface standard} and Proposition \ref{prop: Sigma_0 moved by self-ref diff}, one immediately gets the following corollary, which implies that adding self-referential disks can not be made into a symplectic operation.
\begin{corollary}\label{cor: sigma_m not symplectic}
For any $m\neq 0$, the surface $\Sigma_{m}$ is not smoothly isotopic to a symplectic surface in $(X,\omega_{\mu})$.    
\end{corollary}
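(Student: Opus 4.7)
The plan is to derive Corollary \ref{cor: sigma_m not symplectic} as an immediate consequence of the two cited results, via a short contradiction argument.

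First I would observe that $\Sigma_0$ itself qualifies as a symplectic surface: by construction $J_0$ is $\omega_\mu$-compatible and $\Sigma_0$ is $J_0$-holomorphic, so $\omega_\mu|_{\Sigma_0}$ is a volume form. Next, since $\Sigma_m = f_\gamma^m(\Sigma_0)$ and $f_\gamma$ is homotopic to the identity, the homology class satisfies
\[
[\Sigma_m] = [\Sigma_0] = [\Sigma_0] + 0\cdot[F] \in H_2(X;\mathbb{Z}).
\]
Thus any surface smoothly isotopic to $\Sigma_m$ represents the same homology class $[\Sigma_0]$, which fits the hypothesis of Proposition \ref{proposition symplectic suface standard} with $k=0$.

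Now suppose for contradiction that there exists an embedded symplectic surface $\Sigma' \subset (X,\omega_\mu)$ that is smoothly isotopic to $\Sigma_m$ for some $m\neq 0$. Then $\Sigma'$ is connected and embedded with $[\Sigma'] = [\Sigma_0]$, so Proposition \ref{proposition symplectic suface standard} applied to the pair $(\Sigma_0, \Sigma')$ yields a smooth isotopy from $\Sigma_0$ to $\Sigma'$. Concatenating this with the assumed smooth isotopy from $\Sigma'$ to $\Sigma_m$ produces a smooth isotopy from $\Sigma_0$ to $\Sigma_m$, directly contradicting Proposition \ref{prop: Sigma_0 moved by self-ref diff}.

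There is no substantive obstacle here; the content is entirely packaged into the two prior propositions, and the corollary is just the logical combination ``homology-class-preserving $+$ isotopy-uniqueness of symplectic representatives $+$ Dax-invariant obstruction.'' The only small verification needed is that $f_\gamma$ (and hence all its powers) acts trivially on $H_2(X;\mathbb{Z})$, which follows because $f_\gamma$ is supported in a tubular neighborhood of $\mathcal{B}_\gamma$ and is, as noted in the construction, homotopic to the identity.
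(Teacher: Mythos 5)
Your proof is correct and is exactly the argument the paper intends: the corollary is stated as an immediate consequence of Proposition \ref{proposition symplectic suface standard} and Proposition \ref{prop: Sigma_0 moved by self-ref diff}, and your contradiction argument (noting $[\Sigma_m]=[\Sigma_0]$ since $f_\gamma$ is homotopic to the identity, applying the isotopy-uniqueness of symplectic representatives to the pair $(\Sigma_0,\Sigma')$, and concatenating isotopies) is precisely how those two results combine.
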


\begin{proof}[Proof of Theorem \ref{thm: main}] After rescaling, we may assume $a=[\omega_{\mu}]$. For $m\geq 0$, consider the symplectic form \[\omega_{m,\mu}:=(f^{m}_{\gamma})^*(\omega_{\mu})\in \mathcal{S}_{a}.\]
It suffices to show that for any $n< m$, the symplectic forms $\omega_{n,\mu}$ and $\omega_{m,\mu}$ are not isotopic. By pulling back both symplectic forms via the diffeomorphism $f^{-n}_{\gamma}$,  we may reduce to the case $n=0$. Now suppose $\omega_{m,\mu}$ is isotopic to $\omega_{\mu}$. Then by  Moser's argument, the diffeomorphism $f^{m}_{\gamma}$ is smoothly isotopic to some $f\in \Symp(X,\omega_{\mu})$. Hence $\Sigma_{m}=f^{m}(\Sigma_0)$ is smoothly isotopic to $f(\Sigma_0)$, which is a symplectic surface in $(X,\omega_{\mu})$. This is a contradiction to Corollary \ref{cor: sigma_m not symplectic}.
\end{proof}
\bibliographystyle{amsalpha}
\bibliography{biblio}

\end{document}